\definecolor{light-gray}{gray}{0.95}
\theoremstyle{plain}
\newtheorem{theorem}{Theorem}[section]
\newtheorem{lemma}[theorem]{Lemma}
\newtheorem{corollary}[theorem]{Corollary}
\theoremstyle{definition}
\newtheorem{notation}[theorem]{Notation}
\theoremstyle{remark}
\newtheorem{remark}[theorem]{Remark}
\numberwithin{equation}{section}
\newcommand{\R}{\mathbf{R}}
\newcommand{\N}{\mathbf{N}}
\newcommand{\Z}{\mathbf{Z}}
\renewcommand{\P}{\mathbf{P}}
\newcommand{\E}{\mathbf{E}}
\newcommand{\B}{\mathcal{B}}
\newcommand{\e}{\mathrm{e}}
\newcommand{\Oh}{\mathcal{O}}
\newcommand{\Laplace}{\Delta}
\newcommand{\dLap}{\Laplace_1}
\newcommand{\norm}[1]{\left\lVert#1\right\rVert}
\newcommand{\abs}[1]{\left|#1\right|}
\newcommand{\ceil}[1]{\left\lceil#1\right\rceil}
\newcommand{\dx}{\mathrm{d}x}
\newcommand{\floor}[1]{\left\lfloor#1\right\rfloor}
\author{Patrick W.~Dondl and Michael Scheutzow}
\title[Ballistic motion of interfaces]{Ballistic and sub-ballistic motion of interfaces in a field of random obstacles}
\date{\today}
\begin{document}  
\begin{abstract}We consider a discretized version of the quenched Edwards-Wilkinson model for the propagation of a driven interface through a random field of obstacles. Our model consists of a system of ordinary differential equations on a $d$-dimensional lattice coupled by the discrete Laplacian. At each lattice point, the system is subject to a constant driving force and a random obstacle force impeding free propagation. The obstacle force depends on the current state of the solution and thus renders the problem non-linear. For independent and identically distributed obstacle strengths with exponential moment we prove ballistic propagation (i.e., propagation with a positive velocity) of the interface if the driving force is large enough. For a specific case of dependent obstacles, we show that no stationary solution exists, but still the propagation of the front is not ballistic.
\end{abstract}
\maketitle

\section{Introduction and the main result}
\label{sec:intro}

In this article, we consider a semi-discrete model for the evolution of a driven interface subject to line tension in a random, heterogeneous, quenched environment. We first prove that if the driving force is large enough then such an interface propagates with a positive velocity---even if the random environment contains obstacles of arbitrarily large strength. Furthermore we give an example of sub-ballistic interface evolution in this class of models, when relaxing the assumptions on independence.

Let $(\Omega,\B,\P)$ be a probability space and consider the following lattice differential equation for the height $u_i \colon [0,\infty)\times\Omega \to \R$ of the $d\in\N$ dimensional interface in an ambient space of dimension $d+1$,
\begin{equation}
\label{eq:semidiscrete}
\dot{u}_i(t,\omega) = \dLap u_i(t,\omega) - f_i(u_i(t,\omega),\omega) + F,
\end{equation}
where $i\in \Z^d$, $t\ge0$, and $\omega \in \Omega$, $F\ge 0$. The initial condition is $u_i(0) = 0$. The operator $\dLap$ denotes the discrete $d$-dimensional Laplacian operator, namely $\dLap u_i = \sum_{k\in\Z^d:\norm{k-i}_1 =1} (u_k - u_i)$, where $\norm{\cdot}_1$ denotes the discrete 1-norm. The one-dimensional setting  was discussed in~\cite{Dondl:2011wo}, in this note we generalize our results to arbitrary dimension, albeit only for the (semi-)discrete evolution. 

We assume that the functions $f_i:\R \times \Omega \to [0,\infty)$, $i \in \Z^d$ are such that, disregarding infinitely fast growing unphysical solutions, equation~\eqref{eq:semidiscrete} above admits a unique solution with non-negative velocity for every $i$, that the solution depends measurably on $\omega$ for each $t \ge 0$, and that the solution furthermore follows a comparison principle. For the results in sections~\ref{sec:discrete} and~\ref{sec:cont} we also assume independence in $i$. A non-trivial (i.e., not necessarily uniformly bounded) example of such $f_i$ is given by $f_i(y,\omega) = \sum_{j=1}^\infty f_{i,j}(\omega) \phi(y-j)$ for a piecewise affine hat function satisfying $\phi(0) = 1$ and $\operatorname{supp}\phi = [-1/2,1/2]$, where $f_{i,j}$ are random variables that are independent in $i$.

The main further assumption on the $f_i$ is that they possess a finite exponential moment. As opposed to some other requirements, like complete independence, this assumption is central to our proof. Under these conditions, we can prove our main result.

\begin{theorem}\label{thm:main}
Assume in addition to the above requirements that there exists $\lambda>0$ such that 
$$
\beta:=\sup_{j \in \N_0} \E \exp\big\{\lambda \ceil{\sup_{j-.5 \le y \le j+.5}   f_0(y,\omega)}\big\} < \infty,
$$
where $\ceil{\cdot}$ denotes taking the integer ceiling of the argument.
Then there exists a non-decreasing function $V:[0,\infty) \to [0,\infty)$ which is not identically zero and 
which depends on $\lambda$ and $\beta$ only, such that for all $t>0$ we have
$$
\E \dot{u}_0(t) \ge V(F)
$$
and therefore
$$
\E \frac{u_0(t)}{t} \ge V(F).
$$
Specifically, we can choose 
$$
V(F)= \sup_{\mu>\lambda} \frac{1}{\mu}\left(\lambda (\floor{F} -2d) -\log \beta - \max\Big\{\log\frac{2}{\mu-\lambda}, \log 2e \Big\}\right),
$$
where $\floor{\cdot}$ denotes taking the integer floor of the argument.
\end{theorem}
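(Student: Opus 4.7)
The proof relies on three ingredients: shift-invariance of the interface field, the a priori pathwise bounds $0\le u_i(t)\le Ft$, and a Chernoff estimate tuned to the exponential moment hypothesis. The i.i.d.\ law of the $f_i$, the spatially constant initial condition $u_i(0)=0$, and the translation-invariance of $\dLap$ jointly force the joint law of $(u_i(t))_{i\in\Z^d}$ to be shift-invariant for every $t\ge 0$; combined with the non-negativity of the velocity recorded in the remark, this yields $\E\dLap u_0(t)=0$. Taking expectations in \eqref{eq:semidiscrete} therefore reduces the theorem to the pointwise estimate
\[
\E f_0(u_0(t),\omega)\le F-V(F)\qquad\text{for every }t>0.
\]

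Next I would discretize the obstacle in the height variable: for $j\in\N_0$ set $X_j(\omega):=\ceil{\sup_{j-.5\le y\le j+.5}f_0(y,\omega)}$, giving non-negative integer-valued random variables with $\E e^{\lambda X_j}\le\beta$ and $f_0(u_0(t),\omega)\le X_{J(t)}$ for the random index $J(t):=\ceil{u_0(t)-1/2}\in\{0,\dots,\ceil{Ft}\}$. The principal difficulty---and the main obstacle of the proof---is that $J(t)$ and the weights $X_j$ are strongly positively correlated: hard levels slow the interface down and bias $J(t)$ toward precisely those $j$ with large $X_j$. Consequently, any naive union bound of the form $\E\max_{0\le j\le \ceil{Ft}}X_j\lesssim\lambda^{-1}\log(Ft\cdot\beta)$ is useless, since it diverges with $t$.

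The remedy is a careful level-by-level dynamical estimate. The ODE forces the interface at site $0$ to remain in the unit interval $[j-1/2,j+1/2]$ so long as $\dLap u_0+F<f_0(u_0)$, so crossing level $j$ requires either $X_j<F$ or a Laplacian contribution of at least $X_j-F$; using $\dLap u_0\le 2d\cdot\max_{k\sim 0}(u_k-u_0)$, some neighbour must then lead site $0$ by at least $(X_j-F)/(2d)$. Discarding this Laplacian help and paying for it by a uniform loss of $2d$ units of driving force produces a pathwise comparison in terms of the i.i.d.\ $X_j$ with effective forcing $\floor{F}-2d$. Exponentiation with Chernoff parameter $\mu>\lambda$ and Markov's inequality then give an exponential tail for $X_{J(t)}$, and a geometric summation over the tail values $k$ yields
\[
\E f_0(u_0(t),\omega)\le F-\frac{1}{\mu}\bigl(\lambda(\floor{F}-2d)-\log\beta-C(\mu)\bigr),
\]
with $C(\mu)=\max\{\log(2/(\mu-\lambda)),\log 2e\}$ covering the two regimes (small versus moderate $\mu-\lambda$) of the series. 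Taking $\sup_{\mu>\lambda}$ produces the explicit $V(F)$ of the theorem. The hardest step remains this level-by-level comparison: one must isolate a sub-dynamics whose passage times genuinely decouple into i.i.d.\ summands $X_j$ despite the Laplacian coupling across sites, and the substitution $F\mapsto\floor{F}-2d$ in the final formula is the quantitative price of this decoupling.
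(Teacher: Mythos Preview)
Your reduction via shift-invariance to the inequality $\E f_0(u_0(t),\omega)\le F-V(F)$ is fine, but the ``level-by-level dynamical estimate'' that is supposed to deliver this bound is where the argument breaks. You correctly flag that $J(t)$ and the $X_j$ are positively correlated, but the proposed remedy does not resolve it. First, ``discarding the Laplacian help and paying $2d$ units of force'' does not yield a usable pathwise comparison at site~$0$: the only general lower bound on $\dLap u_0$ available is $\dLap u_0\ge f_0(u_0)-F\ge -F$ (from $\dot u_0\ge 0$), not $-2d$, so there is no single-site sub-dynamics $\dot v=-X_{[v]}+(\floor F-2d)$ that sits below $u_0$. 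Second, even granting decoupled passage times for some comparison process, this gives information about the time-average $u_0(t)/t$, not about $\E X_{J(t)}$ at a fixed $t$; and since the interface spends \emph{more} time at hard levels, time-weighting biases $X_{J(t)}$ upward, not downward. You have not explained how a Chernoff bound on the i.i.d.\ $X_j$ turns into a tail bound for $X_{J(t)}$, and I do not see how it can along these lines.

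The paper's route is entirely different and sidesteps the dynamics. It first proves a purely combinatorial lemma: almost surely, \emph{every} function $w\colon\Z^d\to\{0,\dots,A\}$ satisfying the pointwise constraint $\dLap w_i-\bar f_i(w_i)+F\ge 0$ has box averages of this quantity bounded below by $\overline V(F)$. The proof is a supermartingale estimate on a generating function $Y_k=\sum_{w\in P_k}\exp\{\cdots\}$ summed over \emph{all} admissible configurations in the box $Q_k$, together with a counting argument for the number of extensions. Then, at a fixed time, Birkhoff's ergodic theorem converts $\E\dot u_0$ into the spatial average $\lim_k|Q_k|^{-1}\sum_{i\in Q_k}(\dLap u_i-f_i(u_i)+F)$; rounding each $u_i$ to the nearest integer $w_i$ changes $\dLap$ by at most $2d$ and $F$ by at most $1$ (this is the true origin of the $\floor F-2d$, not any ``Laplacian help''), and the combinatorial lemma applies to the rounded field. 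No control of the time evolution beyond $\dot u_i\ge 0$ is used.
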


The proof is split in two parts, first a discrete result arguing that there can be no discretized interface whose average velocity is small. The second part is an application of this result to the coupled systems of ODEs. 

\begin{remark}
Taking $\mu$ large for small values of $F$, one can see that $V(F) \ge 0$ for all $F>0$. Furthermore, as $F$ becomes large, one can take $\mu$ closer to $\lambda$ to see that there exists a constant $C$, depending only on $\lambda$, $\beta$, and $d$, such that $V(F) \ge F -\frac{1}{\lambda}\log F -C$ for all $F>1$.
\end{remark}

The main theorem also implies the following almost sure result for the point-wise velocity, excluding the existence of stationary solutions.
\begin{corollary} \label{thm:limsup}
Under the conditions of Theorem~\ref{thm:main} we also have
$$
\limsup_{n \to \infty} \frac{u_0(t_n)}{t_n} \ge V(F) \quad \mbox{almost surely}
$$
along any deterministic sequence of times $t_n \to \infty$ as $n\to \infty$. 
\end{corollary}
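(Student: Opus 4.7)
The plan is to derive the almost-sure bound from the expectation bound of Theorem~\ref{thm:main} by combining reverse Fatou with an ergodicity argument exploiting the i.i.d.\ structure of the environment. First I would observe that the constant profile $\bar u_i(t) \equiv Ft$ is a supersolution of~\eqref{eq:semidiscrete}, since $\dot{\bar u}_i - \dLap \bar u_i + f_i(\bar u_i) - F = f_i(Ft) \ge 0$. By the comparison principle mentioned in the Remark after~\eqref{eq:semidiscrete}, $u_0(t) \le Ft$ almost surely, so the sequence $X_n := u_0(t_n)/t_n$ takes values in $[0,F]$ almost surely.

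Since $\E X_n \ge V(F)$ by Theorem~\ref{thm:main} and $(X_n)$ is uniformly dominated by the constant $F$, reverse Fatou gives
$$
\E\bigl[\limsup_{n\to\infty} X_n\bigr] \;\ge\; \limsup_{n\to\infty} \E X_n \;\ge\; V(F).
$$
Set $L(\omega) := \limsup_n X_n(\omega) \in [0,F]$, so that $\E L \ge V(F)$. The remaining task is to promote this expectation bound to a pointwise bound. The i.i.d.\ assumption on the $f_i$ makes the canonical $\Z^d$-shift action $(\tau_e)_{e\in\Z^d}$ on $\Omega$ measure-preserving and ergodic, and by uniqueness the solution is translation-equivariant, i.e.\ $u_i(\cdot,\tau_e\omega)=u_{i-e}(\cdot,\omega)$. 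Setting $L_i(\omega):=\limsup_n u_i(t_n,\omega)/t_n$, one therefore has $L\circ\tau_e=L_{-e}$. Provided $L_i=L_0$ almost surely for every $i\in\Z^d$, the random variable $L$ is translation invariant, hence almost surely constant by ergodicity, and the constant must be at least $\E L\ge V(F)$.

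The main obstacle is therefore the spatial-homogeneity statement $L_i=L_0$ almost surely. Heuristically this reflects the smoothing effect of the discrete Laplacian in~\eqref{eq:semidiscrete}: height differences between neighboring sites cannot grow linearly in $t$. To turn this into a proof I would try an energy-type estimate on $\E\bigl[(u_i(t)-u_j(t))^2\bigr]$ for nearest neighbors $i\sim j$, using the exponential-moment assumption on $f_i$ to control the obstacle terms; Chebyshev plus Borel--Cantelli along the deterministic sequence $(t_n)$ then yields $|u_i(t_n)-u_j(t_n)|/t_n\to 0$ almost surely, and iterating along a shortest $\Z^d$-path between $0$ and $i$ extends the equality $L_i=L_0$ to all of $\Z^d$.
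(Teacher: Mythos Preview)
Your overall architecture---reverse Fatou to get $\E L \ge V(F)$, then ergodicity of the $\Z^d$-shift to upgrade to an almost-sure statement once $L_i = L_0$ is known---matches the paper's. The gap is in the spatial-homogeneity step $L_i = L_0$. Your energy-plus-Borel--Cantelli route has a concrete obstruction: $(t_n)$ is an \emph{arbitrary} deterministic sequence, and Borel--Cantelli via Chebyshev would require $\sum_n \E\bigl[(u_i(t_n)-u_j(t_n))^2\bigr]/t_n^2 < \infty$. Even a uniform-in-time bound $\sup_t \E\bigl[(u_i(t)-u_j(t))^2\bigr] < \infty$ does not give this when, say, $t_n = \log n$, so the argument as written cannot cover all admissible sequences. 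The energy estimate itself is also left unproved.

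The paper sidesteps any quantitative second-moment work by reading a pointwise inequality directly off the ODE: since $\dot u_i \ge 0$ and $f_i \ge 0$, equation~\eqref{eq:semidiscrete} gives $\dLap u_i(t) \ge -F$ for all $t$ and a.e.\ $\omega$. Dividing by $t_n$, taking $\limsup$, and using $\limsup_n \sum_{k\sim i} u_k(t_n)/t_n \le \sum_{k\sim i} A_k$ with $A_k := \limsup_n u_k(t_n)/t_n$ yields $\dLap A_i \ge 0$ almost surely. The field $(A_i)$ is bounded, stationary and ergodic, so the discrete divergence theorem forces $\E\dLap A_0 = 0$; combined with $\dLap A_i \ge 0$ this gives $\dLap A_i = 0$ a.s.\ for every $i$. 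The realizations are therefore bounded and discrete-harmonic, hence constant in $i$, and ergodicity finishes as you indicated. This replaces your unproved energy bound by a one-line deterministic differential inequality and works for every sequence $(t_n)$ simultaneously.
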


A model very similar to the one considered here was recently discussed in~\cite{Bodineau:2013ur}. As opposed to our model, they use a fully discrete evolution, where in each time-step the system advances by one unit at every point where the total force is positive. While some of their results are comparable to ours, they use a rigorous renormalization group approach to prove that in their model (assuming also uniformly bounded obstacles), an interface is either completely blocked (in the sense that a non-negative stationary solution exists) or that it propagates ballistically, i.e., there is no intermediate regime of sub-ballistic propagation. We show in section~\ref{sec:subbal} that this is not the case in general if the obstacles are strongly correlated.

For the present model of independent obstacles, we can only prove that there exist two critical values for the driving force: if the driving force is below the first value, the interface becomes stuck for all times. If, on the other hand, the driving force is above the second value the interface propagates with finite velocity. The first result is a simple adaptation of our methods in~\cite{Dirr:2009uw,Dirr:2010wb} and the second part is proved here. The question of whether an intermediate regime exists in this model is open.

Generally, problems of the present form (whether fully discrete, partially discrete, or fully continuous) have received considerable interest in the physics community (see for example~\cite{Kardar:1998vq,Nattermann:1992we,Narayan:1993vp,Brazovskii:2004vf}). Many connections to questions arising from physics are discussed in the aforementioned article by Bodineau and Teixeira~\cite{Bodineau:2013ur}, as well as in~\cite{Coville:2009uv}, where the first rigorous result on non-existence of stationary states was derived. 

The article is organized as follows. In section~\ref{sec:discrete}, we show non-existence of states whose velocity is too small. In the following section~\ref{sec:cont}, we apply this result to prove our theorem. Section~\ref{sec:subbal} is devoted to the example of sub-ballistic propagation. We finish with some conclusions and an outlook in section~\ref{sec:conc}.

\section{Nonexistence of slow paths}
\label{sec:discrete}
In this section, we prove the central lemma stating that in a fully discrete version of our model, one can with probability one not find any function whose average velocity is too small. Let thus now $\bar{f}_{i}(j,\omega) := \ceil{\sup_{j-.5 \le y \le j+.5} f_i(y,\omega)}$ defined for all $j\in\Z, i\in\Z^d$. For convenience, we begin by introducing some notation.
\begin{notation}
We use the following abbreviations.
\begin{itemize}
\item $Q_k := \{-k+1, \dots, k-1\}^d$, the $d$-dimensional cube of sites in $\Z^d$ of side-length $2k-1$,
%\item $\overline{Q}_k := \{i\in\Z^d \textrm{ such that } \dist_1(i,Q_k)\le 1\}$, all sites in $Q_k$ and their nearest neighbors on the lattice. We have taken $\dist_1$ to be the metric induced by the 1-norm,
\item $\B_k := \sigma(\{\bar{f}_{i} : i\in Q_k\})$, the $\sigma$-algebra generated by the random functions in $Q_k$,
\item $A\in \N$, any fixed number, later to be taken as the integer ceiling of an \textit{a priori} bound on the maximal value the functions $u_i$, solutions of~\eqref{eq:semidiscrete} can take at time $t$,
\item $P(\omega) := \{w\colon \Z^d  \to \{0, \dots, A\}, \textrm{ such that } \dLap w_i - \bar{f}_i(w_i,\omega) +F \ge 0 \textrm{ for all } i\in Z^d\}$, the set of admissible functions,
\item $P_k(\omega) := \{w\colon Q_{k+1}  \to \{0, \dots, A\}, \textrm{ such that } \dLap w_i - \bar{f}_i(w_i,\omega) +F \ge 0 \textrm{ for all } i\in Q_k\}$, the set of admissible functions within a cube $Q_{k+1}$,
\item $c_{k,d} = |Q_{k+1}\setminus Q_{k}| = (2k+1)^d-(2k-1)^d$, the size of the boundary layer around $Q_k$,
\item $N_{m,j} = \binom{j+m-1}{m-1}$, the number of ways $j\in \N_0$ can be represented as the sum of  $m$ (ordered) non-negative integers.
\end{itemize}
\end{notation}

\begin{lemma}
\label{lem:discrete}
For each $F\in\N_0$, there exists a set $\Omega_0$ of full measure such that for any  $\omega \in \Omega_0$ and any function $w\in P(\omega)$ we have 
\begin{equation}\label{ineq}
\liminf_{k \to \infty} \frac {1}{\abs{Q_k}}\sum_{i\in Q_k} \left(\dLap w_i - \bar{f}_i(w_i,\omega)+F \right) \ge \overline{V}(F),
$$
where $\overline{V}$ can be taken as 
$$
\overline{V}(F) = \sup_{\mu>\lambda} \frac{1}{\mu}\left(\lambda F -\log \beta - \max\Big\{\log\frac{2}{\mu-\lambda}, \log 2e \Big\}\right),
\end{equation}
and $\beta$ and $\lambda$ are defined in Theorem~\ref{thm:main}.
\end{lemma}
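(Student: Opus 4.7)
The strategy is a first-moment / Borel--Cantelli argument on the event
\begin{equation*}
E_k(v):=\Big\{\exists\, w\in P_k(\omega):\;\frac{1}{\abs{Q_k}}\sum_{i\in Q_k}\bigl(\dLap w_i-\bar{f}_i(w_i,\omega)+F\bigr)<v\Big\},
\end{equation*}
and on its $w$-slice $E_w:=E_k(v)\cap\{w\in P_k(\omega)\}$ for each fixed candidate $w\colon Q_{k+1}\to\{0,\dots,A\}$. I aim to show $\sum_{k}\P(E_k(v))<\infty$ for every rational $v<\overline{V}(F)$: since any $w\in P(\omega)$ restricts to an element of $P_k(\omega)$, Borel--Cantelli will then give the desired liminf bound almost surely for each such $v$, and intersecting over a countable sequence $v_m\uparrow\overline{V}(F)$ yields the full claim.

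For fixed $w$, set $a_i:=\dLap w_i+F$ and $\delta_i:=a_i-\bar{f}_i(w_i,\omega)$. On $E_w$ the $\delta_i$ are non-negative integers with $\sum_{i\in Q_k}\delta_i<vn$, where $n:=\abs{Q_k}$. Independence of $\{\bar{f}_i\}_{i\in Q_k}$ together with Markov applied to the hypothesis $\E\exp(\lambda\bar{f}_i(w_i))\le\beta$ give, for any fixed $(\delta_i)$,
\begin{equation*}
\P\bigl(\bar{f}_i(w_i)=a_i-\delta_i\text{ for all }i\in Q_k\bigr)\le \beta^n e^{-\lambda\sum_i(a_i-\delta_i)}.
\end{equation*}
Summing over $(\delta_i)\in\N_0^{Q_k}$ with $\sum_i\delta_i=s$ contributes the combinatorial factor $N_{n,s}$, so $\P(E_w)\le \beta^n e^{-\lambda\sum_i a_i}\sum_{s=0}^{\floor{vn}}e^{\lambda s}N_{n,s}$.

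The $s$-sum is controlled by picking $\mu>\lambda$ and using $e^{\lambda s}\le e^{\mu vn}e^{-(\mu-\lambda)s}$, valid for $s\le vn$, together with the generating-function identity $\sum_{s\ge 0}N_{n,s}x^s=(1-x)^{-n}$ at $x=e^{-(\mu-\lambda)}$:
\begin{equation*}
\sum_{s=0}^{\floor{vn}}e^{\lambda s}N_{n,s}\le e^{\mu vn}\bigl(1-e^{-(\mu-\lambda)}\bigr)^{-n}\le e^{\mu vn}\bigl(\max\{2/(\mu-\lambda),\,2e\}\bigr)^{n},
\end{equation*}
where the last step is the elementary estimate $(1-e^{-x})^{-1}\le\max\{2/x,\,2e\}$ for $x>0$. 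Since $\sum_{i\in Q_k}\dLap w_i$ telescopes to a boundary flux of magnitude at most $c_{k,d}A=o(n)$, one obtains
\begin{equation*}
\P(E_w)\le\exp\!\Bigl(n\bigl[\log\beta+\log\max\{2/(\mu-\lambda),\,2e\}+\mu v-\lambda F\bigr]+o(n)\Bigr).
\end{equation*}
By the very definition of $\overline{V}(F)$, when $v<\overline{V}(F)$ one can choose $\mu>\lambda$ rendering the bracket strictly negative, so $\P(E_w)\le e^{-cn}$ for some $c=c(v,\mu)>0$.

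The main obstacle is the closing union bound $\P(E_k(v))\le\sum_{w}\P(E_w)$: the naive count $(A+1)^{\abs{Q_{k+1}}}$ would pollute $\overline{V}(F)$ with an $A$-dependent correction. The remedy is to reparametrise each admissible $w$ by the pair $(g,a)$ with $g:=w|_{Q_{k+1}\setminus Q_k}$ and $a:=(a_i)_{i\in Q_k}\in\N_0^{n}$: the map $w\mapsto(g,a)$ is injective by uniqueness for the discrete Dirichlet problem, the boundary data contributes only the sub-exponential factor $(A+1)^{c_{k,d}}=e^{o(n)}$, and the total mass $\sum_i a_i$ is already discounted by the factor $e^{-\lambda\sum_i a_i}$ already present in $\P(E_w)$. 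After this bookkeeping the bulk $w$-entropy is absorbed, the bound above becomes summable in $k$, and Borel--Cantelli completes the argument.
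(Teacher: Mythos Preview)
Your argument is fine up through the single-$w$ bound
\[
\P(E_w)\le \beta^{n}\,e^{-\lambda\sum_{i\in Q_k}a_i}\sum_{s=0}^{\lfloor vn\rfloor}e^{\lambda s}N_{n,s},
\]
but the closing paragraph contains a genuine gap: the factor $e^{-\lambda\sum_i a_i}$ cannot simultaneously produce the $-\lambda F$ in the rate \emph{and} absorb the bulk $w$-entropy. By the discrete divergence theorem, $\sum_{i\in Q_k}a_i=nF+O(Ac_{k,d})$ is (to leading order) the \emph{same number} for every admissible $w$, so $e^{-\lambda\sum_i a_i}=e^{-\lambda Fn+o(n)}$ is essentially constant across $w$. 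It therefore cannot discriminate between different $a$'s in the image of your injection $w\mapsto(g,a)$; summing over that image just multiplies by its cardinality, which is $(A+1)^{\abs{Q_k}}$ in the worst case. If instead you forgo the step $\sum_i a_i\approx nF$ and genuinely sum $\sum_{a\in\N_0^{n}}e^{-\lambda\sum a_i}=(1-e^{-\lambda})^{-n}$, you lose the $-\lambda F$ term entirely and obtain an $F$-independent rate. Either way you do not recover $\overline V(F)$; a direct first-moment bound over all of $Q_k$ unavoidably picks up an $A$-dependent bulk correction.

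The paper circumvents this by \emph{not} doing the union bound globally. It builds a nonnegative supermartingale
\[
Y_k=\sum_{w\in P_k}\exp\Big\{\lambda\!\!\sum_{\substack{i\in Q_k,\ r\notin Q_k\\ \|i-r\|_1=1}}\!\!(w_r-w_i)-\mu\sum_{i\in Q_k}\bigl(\dLap w_i-\bar f_i(w_i)+F\bigr)\Big\}
\]
and controls $\E(Y_{k+1}\mid\B_k)\le\gamma_k Y_k$ by counting only \emph{extensions} of each $w\in P_k$ to $P_{k+1}$. The crucial observation is that once the velocities $\dLap w_i-\bar f_i(w_i)+F$ are fixed on the shell $Q_{k+1}\setminus Q_k$, the Laplacian at each shell site gives one linear equation for the new outer values, so the residual freedom in the extension lives on only $\xi_{k,d}=c_{k+1,d}-c_{k,d}=O(k^{d-2})$ sites. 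Hence the $A$-entropy per step is $(A+1)^{O(k^{d-2})}$, and after summing the shells this is $e^{O(k^{d-1}\log(A+1))}=e^{o(\abs{Q_k})}$, i.e.\ genuinely lower order. Doob's convergence theorem then replaces your Borel--Cantelli step. It is precisely this incremental, shell-by-shell bookkeeping (rather than any property of the weight $e^{-\lambda\sum a_i}$) that keeps the $A$-entropy subdominant and yields the stated $\overline V(F)$.
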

\begin{proof}
Fix $\mu > \lambda$ and consider for $k\ge 1$ the sequence of random variables
$$
Y_k := \sum_{w\in P_k} \exp\big\{ \lambda \sum_{\substack{i\in Q_k \\ r\notin Q_k \\ \norm{i-r}_1 =1}}(w_r-w_i) - \mu \sum_{i\in Q_k} \left(\dLap w_i -\bar{f}_i(w_i,\omega) +F\right) \big\}.
$$
The basic underlying idea in this definition is the following. We will show, using a martingale argument, that for sufficiently large $F$ the sequence $Y_k$ almost surely vanishes exponentially in the size of the box $Q_k$. For this decrease we can also establish a rate. Such a decrease, however, implies that as we look at larger and larger boxes around the origin, either the sum of the normal derivatives at the boundary of the box (the first term in the exponential) has to become large and negative quickly, or the sum of the velocities (the second term in the exponential) in $Q_k$ has to increase with a rate related to the one with which $Y_k$ vanishes. The first option is excluded by the non-negativity of $w$. The second option yields the average velocity (with a negative sign), after taking a logarithm and using the sum over all paths as an estimate for the supremum over all possible paths.

The first step in the proof is to relate the change in normal derivatives as $k$ increases to the addition of terms in the sum over the Laplcian. We use a discrete version of the divergence theorem, namely that 
$$
\sum_{\substack{i\in Q_k \\ r\notin Q_k \\ \norm{i-r}_1 =1}}(w_r-w_i) = 
\sum_{i\in Q_k} \dLap w_i,
$$ 
and thus
$$
Y_k = \sum_{w\in P_k} \exp\big\{ (\lambda-\mu) \sum_{i\in Q_k} \dLap w_i - \mu \sum_{i\in Q_k} \left(-\bar{f}_i(w_i,\omega) +F\right) \big\}.
$$
A calculation now yields
\begin{align*}
&\E(Y_{k+1} | \B_k) = \sum_{w\in P_k}\Bigg( \exp\big\{  (\lambda-\mu) \sum_{i\in Q_k} \dLap w_i - \mu \sum_{i\in Q_k} \left(-\bar{f}_i(w_i,\omega) +F\right)\}\cdot \\
&\quad\E\sum_{\substack{\textrm{extensions}\\ \textrm{of $w$ to $P_{k+1}$}}} \exp\big\{ \lambda \sum_{i\in Q_{k+1} \setminus Q_k} \dLap w_i  - 
\mu \sum_{i\in Q_{k+1} \setminus Q_k} (\dLap w_i - \bar{f}_i(w_i) +F) \big\} \Bigg),
\end{align*}
where the sum in the second line is taken over all admissible extensions of $w$ to functions in $P_{k+1}$. Taking now
\begin{equation} \label{eq:defgamma}
\gamma_k := \sup_{w\in P_k} \E\sum_{\substack{\textrm{extensions}\\ \textrm{of $w$ to $P_{k+1}$}}} \exp\big\{ \lambda \sum_{i\in Q_{k+1} \setminus Q_k} \dLap w_i  - 
\mu \sum_{i\in Q_{k+1} \setminus Q_k} (\dLap w_i - \bar{f}_i(w_i) +F) \big\},
\end{equation}
with the sum as above over all possible extensions, we get
\begin{equation}
\label{eq:mart}
\E(Y_{k+1} | \B_k) \le \gamma_k Y_k, \quad \textrm{for $k\ge 1$}.
\end{equation}
In order to estimate $\gamma_k$ further, we need to rearrange and count the number of possible extensions. In the sum over all admissible extensions we thus first take all extensions such that $\sum_{i\in Q_{k+1} \setminus Q_k} (\dLap w_i - \bar{f}_i(w_i) +F) = j \in \N_0$, calling these ``admissible extensions with velocity $j$'' and then sum over all $j\ge 0$. In the case that there does not exist an admissible extension with velocity $j$, we take the sum to be zero. This yields
\begin{align*}
\gamma_k &= \sup_{w\in P_k}\E\sum_{j=0}^\infty \sum_{\substack{\textrm{adm.~ext.}\\ \textrm{with vel.~$j$}}}\exp\big\{ \lambda \sum_{i\in Q_{k+1} \setminus Q_k} \dLap w_i  - \\
&\qquad\qquad\qquad\qquad\qquad\quad\;\; \mu \sum_{i\in Q_{k+1} \setminus Q_k} \left(\dLap w_i - \bar{f}_i(w_i) +F\right) \big\} \\
&= \sup_{w\in P_k}\sum_{j=0}^\infty \e^{-j(\mu-\lambda)}\e^{-\lambda c_{k,d} F} \;
\E \sum_{\substack{\textrm{adm.~ext.}\\ \textrm{with vel.~$j$}}}\exp\big\{\lambda\sum_{i\in Q_{k+1} \setminus Q_k} \bar{f}_i(w_i) \big\} \\
&\le \sup_{w\in P_k} \sum_{j=0}^\infty\e^{-j(\mu-\lambda)}\e^{-\lambda c_{k,d} F} \beta^{c_{k,d}} \sup_{\omega\in\Omega} M_{j,k,d}(\omega, w|_{Q_{k+1}}),
\end{align*}
where $M_{j,k,d}(\omega, w|_{Q_{k+1}})$ is the of the number of admissible extensions with velocity $j$, depending on the realization of the random field $f$ and on $w$ from the previous step. We also note that $w_i$ for $i \in Q_{k+1} \setminus Q_k$ is a fixed value inside the supremum, which allows us to use the assumption on the exponential moment of $f$.

The idea for estimating $M_{j,k,d}$ now is the following: given $j$, there are no more than $N_{c_{k,d},j}$ possibilities to distribute these velocities on the $c_{k,d}$ sites. With all velocities fixed, for most sites in $Q_{k+2} \setminus Q_{k+1}$ where the extension lives, the function value is determined due to the fact that $\omega$ and the velocity can be used to calculate the discrete Laplacian (if such a choice exists at all). The number of sites where we still have freedom is $\Oh(d-2)$. We thus aim for an estimate of the type $\sup M_{j,k,d} \le N_{c_{k,d},j}\cdot 1 \cdot (A+1)^{C^{d-2}}$.

\begin{figure}
\includegraphics[width=\textwidth]{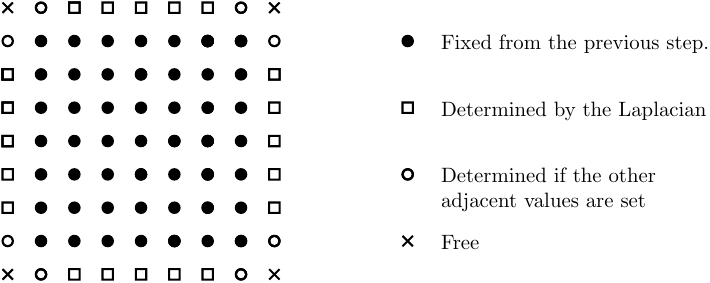}
\caption{Illustration of the extension process and the points where there is a freedom to choose the function value of the extension when the Laplacian is fixed.} \label{fig:extension}
\end{figure}
First notice that in dimension $d=1$, there is no freedom to choose any additional values for the extension if the Laplacian at the boundaries is given. Consider thus the case $d\ge 2$. Each given function value and the Laplacian on the site can be used to write one independent linear equation for the function values on the extension, therefore the remaining number of sites with freedom is $c_{k+1,d}-c_{k,d} =: \xi_{k,d}$. See the illustration in Figure~\ref{fig:extension} for the two-dimensional case. 
% Need to figure out a program to draw this in 3d..
Note that the number of choices for each of those nodes is limited to at most $A+1$, and that $\xi_{k,d} = \Oh(k^{d-2})$.

Since the above estimate was independent of $w \in P_k$ and on $\omega\in \Omega$, this yields
$$
\gamma_k \le \sum_{j=0}^\infty\e^{-j(\mu-\lambda)}\e^{-\lambda c_{k,d} F} \beta^{c_{k,d}} N_{c_{k,d},j} (A+1)^{\xi_{k,d}}.
$$

Using the estimate $N_{m,j}\le \frac{(j+m-1)^{m-1}}{(m-1)!}\le \frac{2^{m-2}}{(m-1)!}\left(j^{m-1}+(m-1)^{m-1}\right)$, the sum can be bounded as follows. We have
$$
\sum_{j=0}^\infty N_{c_{k,d},j}\e^{-(\mu-\lambda)j} \le \frac{2^{c_{k,d}-2}}{(c_{k,d}-1)!}
\left(\sum_{j=0}^\infty j^{c_{k,d}-1}\e^{-(\mu-\lambda)j} + (c_{k,d}-1)^{c_{k,d}-1}\frac{1}{1-\e^{-(\mu-\lambda)}}\right),
$$
which, using
\begin{align*}
\sum_{j=0}^\infty j^{c_{k,d}-1}\e^{-(\mu-\lambda)j} &\le \sum_{j=0}^\infty \int_j^{j+1} x^{c_{k,d}-1} \e^{-(\mu-\lambda)(x-1)}\,\dx \\
&=\frac{\e^{\mu-\lambda}}{(\mu-\lambda)^{c_{k,d}}}\Gamma(c_{k,d}) \\
&= \frac{\e^{\mu-\lambda}}{(\mu-\lambda)^{c_{k,d}}}(c_{k,d}-1)!
\end{align*}
yields 
$$
\sum_{j=0}^\infty N_{c_{k,d},j}\e^{-(\mu-\lambda)j} \le \max\left\{\frac{2^{c_{k,d}}\e^{\mu-\lambda}}{(\mu-\lambda)^{c_{k,d}}}, \frac{(2c_{k,d}-2)^{c_{k,d}-1}}{(c_{k,d}-1)!}\frac{1}{1-\e^{-(\mu-\lambda)}} \right\} 
$$
We thus have 
\begin{align*}
\log \gamma_k &\le \xi_{k,d}\log(A+1) + c_{k,d}(\log \beta-\lambda F)+  \\
&\quad + 
\max\Big\{\mu-\lambda+c_{k,d}\log\frac{2}{\mu-\lambda},\\
& \quad\quad\quad  (c_{k,d}-1) \log 2\e + \log C - \log\left(1-\e^{-(\mu-\lambda)}\right) \Big\}
\end{align*}
for some constant $C$.

Equation~\eqref{eq:mart} together with the boundedness of $\gamma_k$ and the almost sure finiteness of $Y_1$ establishes that $ \frac{Y_k}{\prod_{j=1}^{k-1}\gamma_j}$ for $k\ge 2$ is a non-negative supermartingale. From Doob's martingale convergence theorem we therefore find that 
$$
\lim_{k\to\infty} \frac{Y_k}{\prod_{j=1}^{k-1}\gamma_j} = C(\omega) < \infty
$$
on a set $\Omega_0 \subset \Omega$ of full measure. Note furthermore that
$$
\frac{\log Y_k}{\mu \abs{Q_k}} \ge \sup_{w\in P_k} \Big\{\frac{\lambda}{\mu\abs{Q_k}}  \sum_{\substack{i\in Q_k \\ r\notin Q_k \\ \norm{i-r}_1 =1}}(w_r-w_i) - \frac{1}{\abs{Q_k}}\sum_{i\in Q_k} \left(\dLap w_i -\bar{f}_i(w_i,\omega) +F\right) \Big\}
$$
and thus
\begin{align*}
& \inf_{w \in P(\omega)}\liminf_{k\to\infty} \frac {1}{\abs{Q_k}}\sum_{i\in Q_k} \left(\dLap w_i - \bar{f}_i(w_i,\omega)+F \right) \\
&\quad \ge \liminf_{k\to \infty} \frac{-1}{\mu \abs{Q_k}} \sum_{i=1}^k \log \gamma_j \\
&\quad \ge \frac{1}{\mu} \left(\lambda F -  \log \beta- \max\Big\{\log\frac{2}{\mu-\lambda}, \log 2\e \Big\}\right)
\end{align*}
where we have used that $\abs{Q_k} = \sum_{i=1}^k c_{i,d}$ and dropped all terms that are of lower order than $\abs{Q_k}$. In particular, these are the terms in $\gamma_i$ that are of lower order than $c_{i,d}$ as well as $C(\omega)$ and the first sum inside the exponent in $Y_k$, which vanishes in the limit due to the boundedness of $w$. This proves the lemma.
\end{proof}

\section{Application to the continuous evolution problem} \label{sec:cont}
The lemma from the above section allows us to complete the proof of the main theorem.
\begin{proof}[Proof of Theorem~\ref{thm:main}]
Assume that the statement in the theorem is untrue. Then there exist $F \ge 0$ and some $t_0$  
such that $\E\dot u_0(t_0) < V(F)$. By our independence assumptions on the field $f$, the processes 
$u_i(t_0)$, $\dot u_i(t_0)$, $i \in \Z^d$ are stationary and ergodic and take values in $[0,\infty)$. 
We write $u_i$ instead of $u_i(t_0)$. By Birkhoff's ergodic theorem, we have 
$$
\E\dot u_0 = \lim_{k \to \infty} \frac {1}{\abs{Q_k}}\sum_{i\in Q_k} \left(\dLap u_i - f_i(u_i,\omega)+F \right)
$$
almost surely. However, taking $w_i$ to be $u_i$ rounded to the closest integer, we find
\begin{align*}
&\lim_{k \to \infty} \frac {1}{\abs{Q_k}}\sum_{i\in Q_k} \left(\dLap u_i - f_i(u_i,\omega)+F \right) \ge \\
&\quad\liminf_{k \to \infty} \frac {1}{\abs{Q_k}}\sum_{i\in Q_k} \left(\dLap w_i -2d - \bar{f}_i(w_i,\omega)+\floor{F} \right) \ge \overline{V}(\floor{F}-2d) = V(F)
\end{align*}
by Lemma~\ref{lem:discrete}.
\end{proof}
The almost sure statement about the velocities can be derived by the following argument.
\begin{proof}[Proof of Corollary~\ref{thm:limsup}]
Consider, for a fixed sequence of times $t_n \to \infty$, the random variables
$$
A_i(\omega) := \limsup_{n\to\infty} \frac{u_i(t_n)}{t_n}, \quad i \in \Z^d,
$$
noting that $A_i$ is stationary, ergodic and bounded from above and below by $F$ and $0$, respectively. Furthermore, we have $\E(A_i) \ge V(F)$, by Fatou's lemma. By the non-negativity of the velocity and $f_i$, it follows that $\dLap u_i(t,\omega) \ge -F$ for all $t\ge 0$ and almost all $\omega$, and therefore $\dLap A_i(\omega) \ge 0$ for almost all $\omega$.

Now let $\xi_i:=\E\dLap A_i$. By stationarity, $\xi_i$ is constant in $i$ and we write $\xi := \xi_0$. By the discrete divergence theorem, boundedness of $A_i$ and ergodicity of $\dLap A_i$ imply that $\xi = 0$ and since $\dLap A_i(\omega) \ge 0$ for almost all $\omega$ we have $\dLap A_i(\omega) = 0$ almost surely and for all $i \in \Z^d$. This yields that $A_i(\omega)$ is a bounded, ergodic, and stationary process whose realizations are almost surely harmonic. Thus, $A_i(\omega)$ is almost surely constant in $i$ and therefore $A_0(\omega)$ is almost surely equal to its expected value. The desired result follows.
\end{proof}

\section{An example for non-ballistic evolution} \label{sec:subbal}
In the following we construct a counterexample showing that non-existence of a stationary solution does not necessarily imply a positive velocity. For simplicity, we first consider a \emph{fully-discrete} evolution problem, where the interface height $u$ at discrete times $k\in\N_0$ is given by
$$
u_i(k+1,\omega)-u_i(k,\omega) =  S(\dLap u_i(k,\omega) - Q_{i,u_i(k,\omega)}(\omega) + F),
$$
for a given random obstacle field $Q_{i,j}(\omega) \in \{0,1\}$ for $i\in\Z$, $j\in\N_0$ and with initial condition $u_i(0) = 0$. The evolution law $S\colon \R\to \{0,1\}$ is given by $S(a) = 1$ for (strictly) positive $a$, zero otherwise. In the following, we fix $F=1/2$, noting that the interface will not move at a point $(i,j)$ where there is an obstacle (i.e, $Q_{i,j} = 1$) and the interface is flat (i.e., $u_{i-1}(k) = u_{i}(k) = u_{i+1}(k) = j$). If, on the other hand, either the site $(i,j)$ does not have an obstacle or the interface possesses an upward kink in the sense that $\dLap u_i(k,\omega) \ge 1$, the interface will advance in that time-step. We remark that this process follows a comparison principle, that is, considering $\tilde{Q}_{i,j} \ge Q_{i,j}$ for all $i\in\Z$, $j\in\N_0$ and $\tilde{u}_i(k) \le u_i(k)$ for some $k\in\N_0$ and all $i\in\Z$, where $\tilde{u}$, $u$ evolve according to the given process with $\tilde{Q}_{i,j}$ and $Q_{i,j}$, respectively, we have $\tilde{u}_i(l) \le u_i(l)$ for all $l\ge k$ and all $i\in\Z$.

Let now $(N_i)$, $i \in \Z$ be a discrete stationary renewal point process, i.e., $N_i$ is a $\Z$-valued random variable for each $i \in \Z$ such that the random variables $Y_i:=N_i-N_{i-1}$ are iid and strictly positive and the point process is stationary. Let $F$ be the distribution function of $Y_1$. Stationarity of the point process implies that $\E Y_1<\infty$ but imposes no further constraints upon $F$ (other than that $F$ is the distribution function of an $\N$-valued random variable). In the following, let $Y$ be a random variable with distribution function $F$.

For a given $F$ we take independent copies of the process $N$ in each row $j=0,1,2,...$ and say that at each lattice point $(i,j)$ which is {\em not} an element of the point process is an obstacle of size 1 and the other lattice points are free of obstacles. For $i \in \Z$ and $j\in \N_0$, we define thus $Q_{i,j}=1$ if there is an obstacle in row $j$ at location $i$ and $Q_{i,j}=0$ otherwise. For $j\in \N_0$, let $X_j:=\min\{i \in \N_0:Q_{i,j}=0\} \wedge \min\{i \in \N_0:Q_{-i,j}=0\}$. Clearly, the $X_j$ are iid. By choosing $F$ appropriately, we can ensure, that the $X_j$ have an arbitrarily long tail.

Clearly, in each row of the obstacle field there almost surely exist infinitely many holes (on either side of the origin), i.e., for any $j\in\N_0$ there are infinitely many $i\in \N$ such that $Q_{i,j}=0$. We immediately see that no stationary non-negative solution can exist: such a stationary solution would have to be completely flat (otherwise it would necessarily have to have an upward kink), but there is no row without a hole in the obstacle field. Furthermore, for any $M\in\N$, $i\in \Z$, we can calculate a random upper bound for time $k$ such that $u_i(k)\ge M$: start at the lattice point $(i,M-1)$ going right or left until the first hole in the obstacle field appears. From there start going in the row below, again left or right, until the next hole is found. The total number of steps ($+M$) that have to be taken until a hole at row zero is found is the sought after bound. Thus, for any $i\in \Z$, we have $\liminf_{k\to\infty} u_i(k) = +\infty$.

In order to ensure that $\liminf_{n \to \infty} u_0(n)/n =0$ almost surely for the associated discrete time interface model, it suffices to assume that  $\P\{X_1 \ge i^2\}\ge 1/i$ for all $i \in \N$ (say). By the second Borel-Cantelli Lemma, this implies that almost surely, infinitely many of the $X_j$ are greater or equal to $j^2$. Noticing that $X_j \ge j^2$ implies that $u_0(j^2) \le j$, we obtain $\liminf_{n \to \infty} u_0(n)/n =0$ almost surely.

Next, we want to investigate the behaviour of  $\limsup_{n \to \infty} u_0(n)/n$. We first show that if the $X_j$ are sufficiently heavy-tailed, then we also have $\limsup_{n \to \infty} u_0(n)/n=0$. For a given sequence $\alpha_1<\alpha_2<...$ of positive integers, we find some $F$ as above such that 
$$
\sum_{m=1}^{\infty} \big(\P\{X_1 \le \alpha_m\}\big)^m < \infty.
$$
The first Borel-Cantelli Lemma now yields that almost surely we have $\max\{X_1,...,X_m\}>\alpha_m$ for all but finitely many $m$. Choosing for example $\alpha_m=m^2$, we see that $u_0(m^2)\le m$ for all but finitely many $m$, so the claim follows.

For the semi-discrete model~\eqref{eq:semidiscrete}, a similar construction is possible. We take $d=1$, consider $Q_{i,j}$ as above and set $f_i(j+1) = Q_{i,j}$. Then we extend each $f_i$ to a function on the whole real line in a piecewise affine manner, requiring that each piecewise affine hat function is supported in a small $[i-\delta,i+\delta]$-interval around the integers,  fixing $\delta$ later. Setting $F=0.9$, a supersolution $v$ satisfying $\limsup_{t\to\infty} v_0(t)/t = 0$ can be found: start with $v_i<1$ constant in $i\in\Z$  such that $F-f_i(v_i) = -0.05$ for any $i$ where $Q_{i,0} =1$. Now, for any $i\in \Z$ where the fully discrete model described above would jump, simply replace the jump by a motion with velocity $\dot{v_i} = F+2$ for a very short time and then a jump such that the total distance travelled is $1$. This evolution can be continued by always propagating the sites that would jump in the discrete model. We note that due to the fact that for any $i\in\Z$ where the interface was stuck, it was in fact strictly stuck in the sense that the total force acting on $v_i$ is $-0.05$ -- thus a short enough motion of a neighboring site will not induce a positive force at $i$. Furthermore, for our chosen velocity, this evolution is always a supersolution, since the total right hand side of the equation never exceeds $2+F$. We note that in this evolution the time for $v_i$ to reach a fixed height is the same, modulo a constant factor, as for the fully discrete model.

The construction of a subsolution is slightly more involved. We start with $v_i=1-\delta$ constant in $i$, i.e., such that $f_i(v_i) = 0$ for all $i$. Now one can slowly propagate $v_i$ for any $i$ where $Q_{i,0} =0$, until the point where $v_i=1-\delta+0.4$. The additional force acting on neighboring sites through the discrete Laplacian is now large enough that they can propagate as well and pass the obstacles as long as $\delta$ is sufficiently small, e.g., $0<\delta<0.05$. The process of propagation of $v_i$, now jointly with its two nearest neighbors can continue until $v_i$ has reached the value $2-\delta$ and $v_{i-1}=v_{i+1} = 1-\delta+0.6$. Now, again, the force acting on the next nearest neighbors is strong enough so they can start propagating, and thus the evolution can be continued in this local fashion. In order to remove ambiguity in the evolution, we assume here that in each row there is only exactly one obstacle missing, and we are always in the situation that missing obstacles are not nearest or next-nearest neighbors on the lattice. Note that this still provides us with a subsolution, and the time $k$ until $v_i(k) \ge M$ can be calculated in the same way as for the fully discrete model.

\section{Conclusions} \label{sec:conc}
In this note, we have extended our depinning result from~\cite{Dondl:2011wo} to the case of arbitrary dimension in a semi-discrete model of coupled ordinary differential equations. A careful inspection of the proof shows that one can furthermore extend our results to obstacle strengths coupled over a finite distance: if there exists $L>0$ such that sets of obstacles are independent if their distance (in the first $d$-dimensions) is above $L$, one can still obtain similar estimates for the velocity.

The case of the fully continuous model on $\R^d$, however, remains open. Further unresolved issues are whether we have $\liminf_{t\to\infty} \frac{u_0(t)}{t} > 0$ almost surely for sufficiently large $F$, the relaxation of the result to obstacles with fat tails, as well as whether a regime of sub-ballistic propagation (i.e., vanishing velocity, but propagation of the interface to $+\infty$ everywhere) can exist in our models with independent obstacles. As mentioned above, for a specific fully discrete evolution model this last question was answered recently~\cite{Bodineau:2013ur}.

%\bibliographystyle{plain}
%\bibliography{all}

\end{document}